\DeclareMathOperator{\trace}{trace}
\DeclareMathOperator{\dif}{d}
\renewcommand{\H}{\mathscr{H}}
\newcommand{\Hc}{\mathcal{H}}
\newcommand{\V}{\mathscr{V}}
\newcommand{\F}{\mathscr{F}}
\newcommand{\tD}{\mathscr{D}}
\def \a{\alpha}
\def \e{\eta}
\def \phi{\varphi}
\def \Phi{\varPhi}
\def \p{\pi}
\def \t{\tau}
\def \R{\mathbb{R}}
\def \C{\mathbb{C}\,}
\def\widecheckg{g^{\hspace*{-2.5pt}\vbox to 5pt{\hbox to
0pt{\LARGE$\check{}$}}}\hspace*{2pt}}
\def\widecheckl{\lambda^{\hspace*{-3.5pt}\vbox to 8pt{\hbox to
0pt{\LARGE$\check{}$}}}\hspace*{2pt}}
\begin{document}

\title{Harmonic morphisms on Heaven spaces}
\author{Paul Baird and Radu~Pantilie\;\dag}
\thanks{\dag\;Gratefully acknowledges that this work was partially supported by a
C.N.C.S.I.S. grant, code no.\ 811}
\email{\href{mailto:Paul.Baird@univ-brest.fr}{Paul.Baird@univ-brest.fr},
       \href{mailto:Radu.Pantilie@imar.ro}{Radu.Pantilie@imar.ro}}
\address{P.~Baird, D\'epartement de Math\'ematiques, Laboratoire C.N.R.S. U.M.R. 6205,
Universit\'e de Bretagne Occidentale, 6, Avenue Victor Le Gorgeu, CS 93837,
29238 Brest Cedex 3, France}
\address{R.~Pantilie, Institutul de Matematic\u a ``Simion Stoilow'' al Academiei Rom\^ane,
C.P. 1-764, 014700, Bucure\c sti, Rom\^ania}
\subjclass[2000]{Primary 53C43, Secondary 53C28}
\keywords{harmonic morphism, Weyl space, twistorial map}

\newtheorem{thm}{Theorem}[section]
\newtheorem{lem}[thm]{Lemma}
\newtheorem{cor}[thm]{Corollary}
\newtheorem{prop}[thm]{Proposition}

\theoremstyle{definition}

\newtheorem{defn}[thm]{Definition}
\newtheorem{rem}[thm]{Remark}
\newtheorem{exm}[thm]{Example}

\numberwithin{equation}{section}

\maketitle
\thispagestyle{empty}

\section*{Abstract}
\begin{quote}
{\footnotesize  We prove that any (real or complex) analytic horizontally conformal submersion
from a three-dimensional conformal manifold $M^3$ to a two-dimensional conformal
manifold $N^2$ can be, locally, `extended' to a unique harmonic morphism from the
$\Hc$(eaven)-space of $M^3$ to $N^2$.}
\end{quote}

\section*{Introduction}

\indent
\emph{Harmonic morphisms} are maps between Riemannian manifolds (or, more generally, Weyl spaces) which
pull-back (local) harmonic functions to harmonic functions. By a basic result, harmonic morphisms
are characterised as harmonic maps which are horizontally weakly conformal (see \cite{BaiWoo2}\,, \cite{LouPan}\,).\\
\indent
In \cite{Woo-4d} (see \cite{LouPan}\,) it is proved that any (submersive) harmonic morphism $\phi$ with two-dimensional
fibres from a four-dimensional Einstein manifold $(M^4,g)$ is \emph{twistorial}, possibly with respect to the
opposite orientation on $M^4$. That is, after reversing, if necessary, the orientation on $M^4$,
the unique positive almost Hermitian structure $J$ on $(M^4,g)$ with respect to which $\phi$ is holomorphic
is integrable; we then say that $\phi$ is \emph{positive}. Moreover, $J$ is parallel along the fibres of $\phi$\,.
Thus if, further, $(M^4,g)$ is anti-self-dual then $J$ determines a complex surface $S$
in the twistor space of $(M^4,g)$ which, if $J$ is nowhere K\"ahler, is foliated by the holomorphic distribution
determined by the Levi-Civita connection of $g$\,. It follows that, then, $\phi$ is completely determined
by $S$\,. This gives the twistorial construction of all positive harmonic morphisms with two-dimensional fibres from a
four-dimensional Einstein anti-self-dual manifold with nonzero scalar curvature \cite{Woo-4d}\,.\\
\indent
In \cite{LeB-Hspace}\,, it is proved that any analytic three-dimensional conformal manifold $(N^3,c)$
is the conformal infinity of a unique (up to homotheties) four-dimensional Einstein anti-self-dual manifold
$(M^4,g)$ with nonzero scalar curvature; the fact that $(N^3,c)$ is the conformal infinity of $(M^4,g)$ means that
$N^3\sqcup M^4$ is a manifold with boundary (equal to $N^3$) on which $c$ and $g$ determine a conformal structure
with respect to which $g$ has a pole along $N^3$. The constructed Einstein anti-self-dual manifold $(M^4,g)$
is called the \emph{$\Hc$\!(eaven)-space} of $(M^3,c)$ \cite{LeB-Hspace}\,.\\
\indent
The above mentioned two constructions raise the following questions. Given a positive harmonic morphism
with two-dimensional fibres on the $\Hc$-space of a three-dimensional analytic conformal manifold $(N^3,c)$\,,
can it be extended over $N^3$\,? If so, is the resulting map horizontally conformal?
Conversely, can any analytic horizontally conformal submersion, with
one-dimensional fibres on $(N^3,c)$ be, locally, `extended' to a harmonic morphism on its $\Hc$-space?
\emph{In this paper we answer in the affirmative to all of these questions}, thus generalizing results
of \cite{BaiWoo-shearfree} in which the same conclusions are obtained for four-dimensional constant curvature
Riemannian manifolds.\\
\indent
In Section 1 we review a few facts on harmonic morphisms between Weyl spaces. In Section 2 we present
the basic properties of the space of isotropic geodesics of a (three-dimensional) conformal manifold. Here,
the emphasis is on the induced contact structure \cite{LeB-Hspace}\,, \cite{LeB-nullgeod}\,; also,
our presentation seems to be new and more natural. We end Section 2 by illustrating how
this theory can be used to construct horizontally conformal submersions with one-dimensional
fibres from the Euclidean space. This (Example \ref{exm:PB}\,; cf.\ \cite{BaiWoo-shearfree}\,)
also provides examples for the main result of the paper (Theorem \ref{thm:PB}\,), proved in Section 3\,.
There, we also explain how other examples can be obtained by using a construction of \cite{Cal-sds}\,.

\section{Harmonic morphisms between Weyl spaces}

\indent
This paper deals mainly with (real or complex) analytic manifolds and maps. Nevertheless,
the results presented in this section also hold in the category of smooth manifolds.\\
\indent
A \emph{conformal structure} on a manifold $M$ is a line subbundle $c$ of $\otimes^2T^*M$
which, on open sets of $M$, is generated by Riemannian metrics \cite{LeB-nullgeod}\,;
then any such metric is a \emph{local representative} of $c$\,.\\
\indent
If $c$ is a conformal structure on $M$ then $(M,c)$ is a \emph{conformal manifold}. A \emph{conformal connection}
on $(M,c)$ is a linear connection which preserves the sheaf of sections of $c$\,. A \emph{Weyl connection} is a
torsion free conformal connection. If $D$ is a Weyl connection on $(M,c)$ then $(M,c,D)$ is a \emph{Weyl space}.
For more information on Weyl spaces see \cite{Cal-sds}\,, where although the discussion is placed in the setting
of smooth manifolds it can be easily extended to complex manifolds (see \cite{LouPan}\,).\\
\indent
Whilst in the category of complex manifolds, we shall assume that all maps between conformal manifolds
have nowhere degenerate fibres (see \cite{LouPan} for a few facts on harmonic morphisms with
degenerate fibres).

\begin{defn}[cf.\ \cite{BaiWoo2}\,]
Let $\phi:(M,c_M)\to(N,c_N)$ be a map between conformal manifolds. Then $\phi$ is \emph{horizontally
weakly conformal} if, outside the set where its differential is zero, $\phi$ is a Riemannian submersion
with respect to suitable local representatives of $c_M$ and $c_N$.
\end{defn}

\indent
Next, we recall the definitions of harmonic maps and morphisms (see \cite{LouPan}\,, cf.\ \cite{BaiWoo2}\,).

\begin{defn}
A map $\phi:(M,c_M,D^M)\to(N,c_N,D^N)$ between Weyl spaces is \emph{harmonic} if the trace, with respect
to $c_M$, of the covariant derivative of $\dif\!\phi$ is zero.\\
\indent
A \emph{harmonic morphism} between Weyl spaces is a map which pulls-back (local) harmonic functions
to harmonic functions.
\end{defn}

\indent
The following result is basic for harmonic morphisms (see \cite{LouPan} and its references,
cf.\ \cite{BaiWoo2}\,).

\begin{thm}
A map between Weyl spaces is a harmonic morphism if and only if it is a harmonic map which
is horizontally weakly conformal.
\end{thm}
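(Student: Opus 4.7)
The plan is to reduce everything to the standard composition formula and then exploit the existence of local harmonic functions on $N$ to constrain $\phi$. A direct computation using the torsion-free Weyl connections $D^M$ and $D^N$ yields, for any local function $f$ on $N$, the pointwise identity
\begin{equation*}
\Delta^M(f\circ\phi)=\trace_{c_M}\!\bigl(\phi^*(D^N\!\dif\!f)\bigr)+\dif\!f\bigl(\tau(\phi)\bigr),
\end{equation*}
where $\tau(\phi)=\trace_{c_M}\!\bigl(D\dif\!\phi\bigr)$ is the tension field of $\phi$ and both sides are interpreted as sections of the appropriate density bundle. I would derive this by the usual chain rule together with the torsion-freeness of the Weyl connections, so that $\phi$ is harmonic iff $\tau(\phi)\equiv0$.

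For the ``if'' direction I argue as follows. Assuming $\phi$ is harmonic and horizontally weakly conformal with horizontal distribution $\H$ and squared dilation $\lambda^2$, one has $\tau(\phi)\equiv0$, and since $\dif\!\phi|_{\H}$ is conformal, $\trace_{c_M}\!\bigl(\phi^*(D^N\!\dif\!f)\bigr)=\lambda^2\bigl(\Delta^N\!f\bigr)\circ\phi$. Both terms of the composition formula then vanish whenever $f$ is harmonic, so $f\circ\phi$ is harmonic.

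For the ``only if'' direction, fix $p\in M$ and put $q=\phi(p)$. My key step will be to construct, for each pair $(\alpha,\beta)\in T^*_qN\times S^2T^*_qN$ with $\trace_{c_N}\beta=0$, a local harmonic function $f$ on $N$ near $q$ satisfying $\dif\!f(q)=\alpha$ and $(D^N\!\dif\!f)(q)=\beta$. In the analytic category this follows from Cauchy--Kovalevskaya applied to the Weyl Laplacian along an analytic hypersurface through $q$, the Cauchy data providing exactly the traceless degrees of freedom in the second jet. Substituting such an $f$ into the composition formula at $p$ gives
\begin{equation*}
0=\trace_{c_M}\!\bigl(\phi^*\beta\bigr)+\alpha\bigl(\tau(\phi)(p)\bigr).
\end{equation*}
Taking $\alpha=0$ and letting $\beta$ range over all traceless elements of $S^2T^*_qN$, a short linear-algebra argument shows that the horizontal part of $\dif\!\phi\otimes\dif\!\phi$ at $p$ must be a scalar multiple of $c_N^*$, i.e.\ $\phi$ is horizontally weakly conformal at $p$. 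With that secured the first term becomes $\lambda^2\trace_{c_N}\beta=0$, and varying $\alpha$ freely in $T^*_qN$ forces $\tau(\phi)(p)=0$, completing the proof.

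The hard part is the existence of enough local harmonic functions on $(N,c_N,D^N)$ to realise all traceless $2$-jets at a point; this is exactly what decouples horizontal conformality from harmonicity inside the single hypothesis ``$f\circ\phi$ harmonic whenever $f$ is.'' In the analytic setting of the paper Cauchy--Kovalevskaya handles this cleanly, whereas in the smooth case one would instead appeal to local solvability for the linear elliptic equation $\Delta^N\!f=0$.
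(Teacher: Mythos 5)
Your argument is correct and coincides with the proof the paper relies on: the statement is quoted without proof from \cite{LouPan} and \cite{BaiWoo2}, where it is established by exactly this Fuglede--Ishihara scheme --- the composition formula $\Delta^M(f\circ\phi)=\trace_{c_M}\bigl(\phi^*(D^N\!\dif\!f)\bigr)+\dif\!f(\tau(\phi))$ together with the realization of arbitrary trace-free second jets by local harmonic functions on the target. The only point worth flagging is that in the complex-analytic category the vanishing of $\sum_i\dif\!\phi(e_i)\otimes\dif\!\phi(e_i)$ does not force $\dif\!\phi=0$, which is precisely why the paper restricts to maps with nowhere degenerate fibres.
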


\indent
See \cite{LouPan}\,,\,\cite{LouPan-II} for more information on harmonic morphisms between Weyl spaces
and \cite{BaiWoo2} for harmonic morphisms in the setting of Riemannian manifolds; also, see
\cite{Gudbib} for an up to date list of papers on harmonic morphisms.

\section{Spaces of isotropic geodesics}

\indent
In this section, with the exception of Example \ref{exm:PB}\,, all the manifolds and maps are
assumed to be complex analytic.\\
\indent
Let $(M^3,c_M)$ be a three-dimensional (complex-)conformal manifold and let $\p:P\to M$ be the
bundle of isotropic directions tangent to $(M^3,c_M)$\,. Then $P$ is a locally trivial bundle
with typical fibre the conic $Q_1=\C\!P^1$\,.\\
\indent
Let $D$ be a Weyl connection, locally defined, on $(M^3,c_M)$\,. For any $p\in P$ let
$\F_p\subseteq T_pP$ be the horizontal lift, with respect to $D$\,, of $p\subseteq T_{\p(p)}M$.
Then $\F$ is (the tangent bundle of) a foliation on $P$; moreover, $\F$ does not depend of $D$\,.\\
\indent
The quadruple $\t=(P,M,\p,\F)$ is a twistorial structure on $M$ \cite{PanWoo-sd} (see \cite{LouPan-II} for
the definition of twistorial structures in the smooth category), its twistor space $Z$ (that is, the
leaf space of $\F$) is the \emph{space of isotropic geodesics} \cite{LeB-Hspace}\,,\,\cite{LeB-nullgeod} of $(M^3,c_M)$\,.
Furthermore, locally, $\t$ is simple (that is, by passing, if necessary, to an open set of $M$
there exists a, necessarily unique, complex structure on $Z$ with respect to
which the projection $\p_Z:P\to Z$ is a submersion with connected fibres and each of its
fibres intersects the fibres of $\p$ at most once); then the fibres of $\p$ are mapped by $\p_Z$
diffeomorphically onto the \emph{skies} (see \cite{LeB-Hspace}\,,\,\cite{LeB-nullgeod}\,) of $(M^3,c_M)$\,.\\
\indent
\emph{From now on, in this section, we shall assume that $\t$ is simple.}\\
\indent
Let $\tD^P$ be the distribution on $P$ such that $\tD^P_p$ is the sum of ${\rm ker}\dif\!\p_p$ and
the horizontal lift, with respect to $D$\,, of $p^{\perp}\subseteq T_{\p(p)}M$, for each $p\in P$.
Then $\tD^P$ does not depend of $D$\,. Moreover, $\tD^P$ is projectable with respect to $\F$.
To prove this, we place the discussion on the bundle $E$ of conformal frames on $(M^3,c_M)$\,.
Let $p_0$ be a fixed isotropic direction on $\C^{\!3}$, endowed with its canonical conformal structure,
and let $G$ be the subgroup of ${\rm CO}(3,\C)$ which preserves $p_0$. Then $P=E/G$ and let
$\F^E$ and $\tD^E$ be the preimages of $\F$ and $\tD^P$, respectively, through the differential
of the projection $E\to P$.\\
\indent
It is clear that $\tD^P$ is projectable with respect to $\F$ if and only if
$\tD^E$ is projectable with respect to $\F^E$. Furthermore, we can define $\F^E$ and $\tD^E$,
alternatively, as follows. Let $\xi_0$ be a nonzero element of $p_0$
and let $B(\xi_0)$ be the standard horizontal vector field corresponding to $\xi_0$\,;
that is, $B(\xi_0)$ is the vector field on $E$ which, at each $u\in E$\,, is the horizontal lift
of $u(\xi_0)\in TM$, with respect to $D$\,. Then, by using the same notation for
elements of $\mathfrak{co}(3,\C)$ and the corresponding fundamental vector fields on $E$\,,
the distribution $\F^E$ is generated by $B(\xi_0)$ and all $A\in\mathfrak{g}$\,,
where $\mathfrak{g}$ is the Lie algebra of $G$\,. Similarly, $\tD^E$ is generated by all $B(\xi)$
with $\xi\in {p_0}^{\perp}$ and all $A\in\mathfrak{co}(3,\C)$\,.\\
\indent
From Cartan's structural equations and \cite[Ch III, Prop 2.3]{KoNo}
it follows quickly that $\tD^E$ is projectable with respect to $\F^E$ and, therefore, $\tD^P$ is projectable
with respect to $\F$. Moreover, \cite[Ch III, Prop 2.3]{KoNo} also implies that $\tD^E$ is nonintegrable;
that is, nowhere integrable.
Hence, also, the induced distribution $\tD=\dif\!\p_Z\bigl(\tD^P\bigr)$ on $Z$ is nonintegrable.
As $\dim Z=3$\,, this is equivalent to the fact that $\tD$ is a contact distribution on $Z$\,.

\begin{rem} \label{rem:contact_skies}
1) The skies are tangent to the contact distribution $\tD$. Moreover, at each point, $\tD$ is generated
by the tangent spaces to the skies.\\
\indent
2) Any surface embedded in $Z$ corresponds to a foliation by isotropic geodesics on some open set of $M^3$.
\end{rem}

\indent
All of the above can be quickly generalised to conformal manifolds of any dimension thus obtaining results
of \cite{LeB-nullgeod}\,. Next, we concentrate on features specific to dimension three.

\begin{prop}[\,\cite{LeB-Hspace}\,,\,\cite{LeB-nullgeod}\,, cf.\ \cite{Hit-complexmfds}\,] \label{prop:nullgeodfolns}
Let $\V$ be a foliation by isotropic geodesics on $(M^3,c_M)$\,. Then $\V^{\perp}$ is integrable.
Furthermore, if we denote by $S_{\V}$ the embedded surface in $Z$ corresponding to $\V$
then the foliations induced by $\V$, on the leaves of $\V^{\perp}$, correspond to the curves determined
by $\tD$ on $S_{\V}$\,.
\end{prop}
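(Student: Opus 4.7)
The proof separates into the two assertions. Integrability of $\V^\perp$ is a short Frobenius-type computation using the conformality of a chosen Weyl connection $D$ together with the geodesic property of $\V$. The correspondence between $\V$-foliations on leaves of $\V^\perp$ and the $\tD$-tangent curves on $S_\V$ is a twistor-theoretic identification performed in the bundle $P$, by comparing variations of $\V$-leaves inside a fixed leaf of $\V^\perp$ with the defining generators of $\tD^P$.

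For the first assertion, since $\V$ is isotropic one has $\V\subseteq\V^\perp$, so $\V^\perp$ has rank two and is locally generated by a section $\xi$ of $\V$ together with a section $e\in\V^\perp$ satisfying $g(e,\xi)=0$ and $g(e,e)\neq0$ for a local representative $g$ of $c_M$. Integrability therefore reduces to the single identity $g([\xi,e],\xi)=0$. Since $D$ is a conformal connection, $g(D_X\xi,\xi)=\tfrac12 X\bigl(g(\xi,\xi)\bigr)=0$ for every $X$, and in particular $g(D_e\xi,\xi)=0$. On the other hand, $g(D_\xi e,\xi)=-g(e,D_\xi\xi)=0$ because $D_\xi\xi\in\V$ by the geodesic hypothesis. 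Writing $[\xi,e]=D_\xi e-D_e\xi$ and subtracting yields $g([\xi,e],\xi)=0$, as required.

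For the second assertion, identify a point of $S_\V$ with the corresponding isotropic geodesic (a $\V$-leaf) $\gamma$. Given a leaf $L$ of $\V^\perp$, the leaves of $\V|_L$ form a one-parameter family inside $L$, and this family embeds in $S_\V$ as a curve $c_L$. To verify $c_L$ is tangent to $\tD$ at $\gamma\in c_L$, fix $y_0\in\gamma$, choose a curve $\a$ in $L$ with $\a(0)=y_0$ and $\a'(0)$ transverse to $\V$, let $\gamma_s$ denote the $\V$-leaf through $\a(s)$, and set $F(s,t)=\bigl(y_s(t),\,T_{y_s(t)}\gamma_s\bigr)\in P$ for a parametrisation $y_s$ of $\gamma_s$. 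Using the splitting $TP=\ker\dif\!\p\oplus H$ determined by $D$, the tangent vector $\partial_s F|_{(0,0)}\in T_{p_0}P$ decomposes into the horizontal lift of $\a'(0)\in T_{y_0}L=\V^\perp_{y_0}=p_0^\perp$ plus a vertical vector in $\ker\dif\!\p$. By the very definition of $\tD^P$, both summands lie in $\tD^P_{p_0}$, and projecting by $\dif\!\p_Z$ gives $c_L'(0)\in\tD_\gamma$. The reverse assignment is obtained by reading the same computation backwards: a $\tD$-tangent curve in $S_\V$ is a one-parameter family of $\V$-leaves whose infinitesimal base displacement lies in $\V^\perp$, so the swept-out surface in $M$ is tangent to $\V^\perp$ and, by the first part, coincides with a leaf of $\V^\perp$.

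The main technical step is the decomposition of $\partial_s F|_{(0,0)}$ and its matching with the defining generators of $\tD^P$; everything else is either routine calculus or a direct consequence of the definitions in Section 2. The integrability calculation is quite short once the null frame is in place, but it essentially uses both conformality of $D$ (for $g(D_X\xi,\xi)=0$) and the geodesic property of $\V$ (for $g(e,D_\xi\xi)=0$), so neither hypothesis can be dropped.
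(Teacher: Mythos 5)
Your proof is correct and follows essentially the same route as the paper's: the integrability of $\V^{\perp}$ is reduced to torsion-freeness of $D$ (the paper states this in one line; your null-frame computation is the standard way to fill it in, with the one caveat that for a Weyl connection one has $g(D_X\xi,\xi)=\tfrac12 X\bigl(g(\xi,\xi)\bigr)-\tfrac12\alpha(X)g(\xi,\xi)$ with $\alpha$ the Lee form of $D$ relative to $g$ --- harmless here since $g(\xi,\xi)\equiv0$), and the second assertion rests on the identity $\dif\!p\bigl(\V^{\perp}\bigr)=T(p(M))\cap\tD^P$ for the section $p$ of $P$ corresponding to $\V$, which is exactly what your variation-of-geodesics decomposition of $\partial_sF|_{(0,0)}$ into a horizontal lift of a vector of $p_0^{\perp}$ plus a vertical vector establishes. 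No gaps.
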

\begin{proof}
The fact that $\V^{\perp}$ is integrable follows quickly from the fact that $D$ is torsion-free.\\
\indent
Let $p$ be the section of $P$ corresponding to $\V$. Then $\F$ induces a foliation on $p(M)$
(whose leaves are mapped by $\p$ on the leaves of $\V$). As $p(M)$ is an embedded submanifold of $P$
foliated by the fibres of $\p_Z$\,, we have that $S_{\V}=\p_Z(p(M))$ is an embedded surface in $Z$\,.\\
\indent
The last assertion follows from the fact that $\dif\!p(\V^{\perp})=T(p(M))\cap\tD^P$\,.
\end{proof}

\begin{rem}
In Proposition \ref{prop:nullgeodfolns}\,, the fact that $\V^{\perp}$ is integrable follows also
from the fact that $\dif(\p_Z\circ p)(\V^{\perp})=TS_{\V}\cap\tD$ is one-dimensional
and, hence, integrable.
\end{rem}

\indent
Recall \cite{LeB-Hspace} that, under the identification of any sky with $\C\!P^1$,
its normal bundle in $Z$ is $\mathcal{O}(1)\oplus\mathcal{O}(1)$\,. Thus, by applying \cite{Kod} we obtain
that, locally, the skies are contained by a, locally complete, family of projective lines, parametrised by a
four-dimensional manifold $N^4$ which contains $M^3$ as a hypersurface. Let $H^4=N^4\setminus M^3$.
Furthermore, if we denote by $c_N$ the anti-self-dual conformal structure on $N^4$ with respect to which
$Z$ is the twistor space (see \cite{PanWoo-sd}\,) of $(N^4,c_N)$ then:\\
\indent
\quad$\bullet$ $(M^3,c_M)$ is a totally umbilical hypersurface of $(N^4,c_N)$\,;
furthermore, under the identification of the twistor space of $(N^4,c_N)$ with the space of isotropic
geodesics on $(M^3,c_M)$ any self-dual surface $S$ on $(N^4,c_N)$ corresponds to the isotropic
geodesic $S\cap M$ on $(M^3,c_M)$ \cite{LeB-Hspace}\,.\\
\indent
\quad$\bullet$  $\tD|_H$ determines an Einstein representative $g$ of $c_N|_H$, unique up to
homotheties, with nonzero scalar curvature \cite{War-graviton}.

\begin{defn}[\,\cite{LeB-Hspace}\,]
The Einstein anti-self-dual manifold $(H^4,g)$ is called the \emph{$\Hc$-space} of $(M^3,c_M)$\,.
\end{defn}

\indent
In \cite{LeB-Hspace} it is, also, proved that $g$ has a pole of order two along $M^3$.

\begin{exm}[cf.\ \cite{LeB-Hspace}\,] \label{exm:Hspace_hyperbolic}
By identifying, as usual, $\C^{\!3}$ with a subspace of $\C^{\!4}$ we obtain that the space of
isotropic lines (equivalently, geodesics) on $\C^{\!3}$ is equal to the space of self-dual planes
on $\C^{\!4}$ (any self-dual plane of $\C^{\!4}$ intersects $\C^{\!3}$ along an isotropic line and any
isotropic line of $\C^{\!3}$ is obtained this way). Therefore the space $Z$ of isotropic lines
of $\C^{\!3}$ is $\C\!P^3\setminus\C\!P^1$ (this can be also proved directly).\\
\indent
{}From Remark \ref{rem:contact_skies}(1)\,, it follows that the contact structure on $Z$ is induced
by the one-form $\theta=z_1\dif\!z_3-z_3\dif\!z_1-z_2\dif\!z_4+z_4\dif\!z_2$\,,
where $(z_1,\ldots,z_4)$ are homogeneous coordinates on $\C\!P^3$.\\
\indent
On the other hand, on $\C^{\!4}\setminus\C^{\!3}$, the contact form $\theta$ induces, up to homotheties,
the well-known metric of constant curvature
$g=\tfrac{1}{{x_4}^2}\bigl({\dif\!x_1}^2+\cdots+{\dif\!x_4}^2\bigr)$\,.\\
\indent
Thus $(\C^{\!4}\setminus\C^{\!3},g)$ is the $\Hc$-space of $\C^{\!3}$.
\end{exm}

\indent
We end this section by illustrating how Remark \ref{rem:contact_skies}(2) and
Example \ref{exm:Hspace_hyperbolic} can be used to construct horizontally conformal submersions
with one-dimensional fibres on the real Euclidean space.

\begin{exm}[cf.\ \cite{BaiWoo-shearfree}] \label{exm:PB}
With the same notations as in Example \ref{exm:Hspace_hyperbolic}\,, let $S$
be a complex surface in $Z$ given, in homogeneous coordinates, by $z_j=z_j(u,v)$\,, $j=1,\ldots,4$\,.
Suppose that the foliation on $S$ given by $u={\rm constant}$ is tangent to the contact distribution;
equivalently, suppose that the following relation holds
$$z_1\,\frac{\partial z_3}{\partial v}-z_3\,\frac{\partial z_1}{\partial v}=
z_2\,\frac{\partial z_4}{\partial v}-z_4\,\frac{\partial z_2}{\partial v}\;.$$
\indent
Then the map $x=x(u,v)$\,, into $\R^4$, given by $x=(z_1+z_2\,{\rm j})^{-1}(z_3+z_4\,{\rm j})$
is (locally) a real analytic diffeomorphism. Thus, under this diffeomorphism, the projection $(u,v)\mapsto u$
corresponds to a submersion $\widetilde{\phi}$ (locally defined) on $\R^4$. We claim that
$\phi=\widetilde{\phi}|_{\{x_4=0\}}$ is a horizontally conformal submersion on $\R^3$.\\
\indent
Indeed, any horizontally conformal submersion with one-dimensional fibres on $\R^3$ determines, by complexification,
a complex analytic horizontally conformal submersion with one-dimensional fibres on $\C^{\!3}$. Now, any such
submersion on $\C^{\!3}$ (more generally, on any three-dimensional complex conformal manifold) is determined
by the two foliations by isotropic geodesics which are orthogonal to its fibres (see \cite{PanWoo-sd}\,).
In our case, under the correspondence of Remark \ref{rem:contact_skies}(2)\,, these two foliations are determined
by $S$ and its conjugate $\overline{S}$, given by $w_j=w_j(u,v)$\,, $j=1,\ldots,4$\,, where
$w_1=-\overline{z_2}$\,, $w_2=\overline{z_1}$\,, $w_3=-\overline{z_4}$\,, $w_4=\overline{z_3}$\,.\\
\indent
Note that, from Theorem \ref{thm:PB}\,, below, it will follow that $\widetilde{\phi}|_{\{x_4>0\}}$
is a harmonic morphism from the four-dimensional real hyperbolic space.
\end{exm}

\section{Harmonic morphisms on $\Hc$-spaces}

\indent
In this section all the manifolds and maps are assumed to be complex analytic; by a \emph{real manifold/map}
we mean a manifold/map which is the (germ-unique) complexification of a real analytic manifold/map. We shall
further assume that all the even dimensional conformal manifolds are \emph{oriented}; that is, the bundle of conformal
frames admits a reduction to the identity component of the group of conformal transformations.\\
\indent
Next, we recall (\,\cite{PanWoo-sd}\,, see \cite{LouPan}\,) two examples of \emph{twistorial maps}.
See \cite{LouPan-II} and \cite{Pan-tm} for more information on twistorial maps.

\begin{exm}
1) Let $\phi:(M^4,c_M)\to(N^2,c_N)$ be a horizontally conformal submersion between conformal manifolds of dimensions
four and two. Let $J^N$ be the positive Hermitian structure on $(N^2,c_N)$ and let $J^M$ be the (unique)
almost Hermitian structure on $(M^4,c_M)$ with respect to which $\phi:(M^4,J^M)\to(N^2,J^N)$ is holomorphic.
Then $\phi$ is twistorial if $J^M$ is integrable.\\
\indent
2) Let $\phi:(M^4,c_M)\to(N^3,c_N,D^N)$ be a horizontally conformal submersion from a four-dimensional
conformal manifold to a three-dimensional Weyl space. Let $\V={\rm ker}\dif\!\phi$ and let $\H=\V^{\perp}$.
Also, let $D$ be the Weyl connection of $(M^4,c_M,\V)$ (see \cite{LouPan}\,; $D$ is characterized by the
property $\trace_{c_M}(D\V)=0$\,).\\
\indent
\quad{}Let $D_+=D+*_{\H}I^{\H}$ where $I^{\H}$ is the integrability tensor of $H$ (by definition,
$I^{\H}(X,Y)=-\V[X,Y]$\,, for any local horizontal vector fields $X$ and $Y$)
and $*_{\H}$ is the Hodge star-operator of $(\H,c_M|_{\H})$\,.
Then $\phi$ is twistorial if the partial connections on $\H$, over $\H$, induced by $\phi^*(D^N)$ and $D_+$ are equal.
\end{exm}
\indent
Any real harmonic morphism with two-dimensional fibres from an oriented four-dimensional Einstein manifold
(more generally, Einstein--Weyl space) is twistorial, possibly with respect to the opposite orientation on
its domain \cite{Woo-4d} (see \cite{LouPan}\,); we say that the harmonic morphism is \emph{positive} if it
is twistorial with respect to the given orientation on its domain and \emph{negative} otherwise.\\
\indent
Let $(M^3,c_M)$ be a three-dimensional conformal manifold and let $(H^4,g)$ be its $\Hc$-space.
We shall use the same notations as in the previous section.

\begin{thm} \label{thm:PB}
Any horizontally conformal submersion $\phi:(M^3,c_M)\to(Q^2,c_Q)$ can be, locally, extended to a unique twistorial map
$\widetilde{\phi}:(N^4,c_N)\to(Q^2,c_Q)$.\\
\indent
Moreover, $\widetilde{\phi}|_H:(H^4,g)\to(Q^2,c_Q)$ is a harmonic morphism and any positive harmonic morphism
with two-dimensional fibres on $(H^4,g)$ is obtained this way.
\end{thm}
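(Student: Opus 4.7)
The plan is to run the twistor correspondence of Section 2 in both directions, passing between $\phi$ and its extension via a complex surface in $Z$.

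Given $\phi$, I would first extract the relevant data. Let $\V=\ker\dif\!\phi$ and $\H=\V^{\perp}$; then $\H$ is a non-null complex $2$-plane field in $TM^3$ and so contains exactly two null line fields $\ell_{+}$ and $\ell_{-}$. By the argument of \cite{PanWoo-sd} invoked in Example \ref{exm:PB}\,, each is tangent to a foliation by isotropic geodesics. By Remark \ref{rem:contact_skies}(2)\,, these correspond to two embedded complex surfaces $S_{+},\,S_{-}\subseteq Z$. Fixing the orientation for which the sought extension is positive, I set $\Sigma=S_{+}$.

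Next I would build $\widetilde{\phi}$ from $\Sigma$. At each $p\in M^3$ the null direction $\ell_{+,p}\subseteq T_pN^4$ lies in a unique self-dual $2$-plane of $T_pN^4$\,; the corresponding self-dual surface through $p$ is precisely the point $s(p)\in\Sigma$ under the identification of $Z$ with the twistor space of $(N^4,c_N)$. A Kodaira-type local completeness argument — available because the skies have normal bundle $\mathcal{O}(1)\oplus\mathcal{O}(1)$ — extends this off $M^3$: the self-dual surfaces in $N^4$ labelled by $\Sigma$ foliate an open neighbourhood of $M^3$, producing an integrable almost Hermitian structure $J^{N^4}$ whose maximally isotropic eigendistribution is tangent to that foliation. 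Then $\widetilde{\phi}$ is the unique holomorphic submersion $(N^4,J^{N^4})\to(Q^2,J^{Q^2})$ whose fibre through $p\in M^3$ has tangent plane $\V_p+J^{N^4}(\V_p)$\,; this is $2$-dimensional and $J^{N^4}$-invariant, and it meets $T_pM^3$ in $\V_p$\,, so $\widetilde{\phi}|_{M^3}=\phi$. Twistoriality is immediate from the integrability of $J^{N^4}$\,; uniqueness follows because any positive twistorial extension of $\phi$ must have this same $J^{N^4}$\,, itself determined by $\ell_{+}$ (and hence by $\phi$).

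For the harmonic-morphism assertion I would invoke the converse direction of Wood's theorem (\,\cite{Woo-4d}\,,\,\cite{LouPan}\,): on an Einstein anti-self-dual $4$-manifold such as $(H^4,g)$\,, a positive twistorial horizontally conformal submersion with $2$-dimensional fibres is automatically a harmonic morphism. Conversely, any positive harmonic morphism $\psi$ with $2$-dimensional fibres on $(H^4,g)$ is twistorial (by Wood's theorem), and so is encoded by a complex surface $\Sigma'\subseteq Z$\,; intersecting the self-dual surfaces parametrised by $\Sigma'$ with $M^3$ yields an isotropic geodesic foliation of $M^3$ orthogonal to the fibres of a horizontally conformal submersion $\phi':M^3\to Q^2$\,, whose unique twistorial extension is $\psi$. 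I expect the principal obstacle to be this harmonic-morphism step: identifying the tension field of $\widetilde{\phi}|_H$ with a curvature expression that vanishes on the Einstein $\Hc$-space is the non-trivial content of Wood's converse, and it is where the Einstein hypothesis on $g$ is used in an essential way. A secondary technical point is verifying that the family of self-dual surfaces labelled by $\Sigma$ forms an embedded (rather than merely immersed) foliation on an open set of $N^4$, which should follow from the Kodaira argument already used to construct $N^4$ itself.
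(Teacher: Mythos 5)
Your reduction of $\phi$ to twistor data and your treatment of the converse direction match the paper, but the middle step --- the construction of $\widetilde{\phi}$ from that data --- has a genuine gap. You keep only one of the two surfaces and, more importantly, you discard the one-dimensional foliations that the contact distribution of $Z$ induces on them. The single surface $\Sigma$ only yields, via the twistor correspondence, the integrable Hermitian structure $J^{N^4}$ (equivalently, a foliation of a neighbourhood of $M^3$ in $N^4$ by self-dual surfaces); it does not yield the fibres of $\widetilde{\phi}$. Your attempt to recover them by taking ``the unique holomorphic submersion whose fibre through $p\in M^3$ has tangent plane $\V_p+J^{N^4}(\V_p)$'' fails, because a holomorphic foliation by $J^{N^4}$-invariant surfaces transverse to the hypersurface $M^3$ is not determined by its trace on $M^3$: one can perturb the fibres away from $M^3$ (for instance by terms vanishing to second order along $M^3$) while preserving $J^{N^4}$-invariance and integrability. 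For the same reason your uniqueness argument collapses: many distinct holomorphic submersions to $(Q^2,J^{Q^2})$, all horizontally conformal and all twistorial in the sense that $J^{N^4}$ is integrable, share the same $J^{N^4}$ and the same restriction to $M^3$.

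The paper closes exactly this gap by feeding into the correspondence of \cite{PanWoo-sd} the pair $S_1\sqcup S_2$ \emph{together with} the induced foliations $TS_j\cap\H$, where $\H$ is the contact distribution of $Z$ (Proposition \ref{prop:nullgeodfolns} identifies these with the foliations induced by the two null line fields on the leaves of their orthogonal complements). These foliations --- equivalently, the condition that $J^{N^4}$ be parallel along the fibres, which is encoded by the contact structure and hence by the Einstein geometry of $H^4$ --- are precisely what pins down the fibres of $\widetilde{\phi}$ off $M^3$ and what makes the twistorial extension unique. Your appeal to Wood's theorem for the harmonic-morphism assertion and your converse argument (intersecting the two foliations by self-dual leaves with $M^3$) are in line with the paper.
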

\begin{proof}
Let $Z$ be the space of isotropic geodesics of $(M^3,c_M)$ and let $\H\subseteq TZ$ be the corresponding
contact distribution.\\
\indent
As $\phi$ is horizontally conformal it determines two foliations by horizontal isotropic geodesics on $(M^3,c_M)$\,.
From Proposition \ref{prop:nullgeodfolns}\,, it follows that $\phi$ corresponds to a pair of disjoint surfaces
$S_1\sqcup S_2\subseteq Z$ endowed with the one-dimensional foliations $TS_j\cap\H$, $(j=1,2)$.\\
\indent
But $Z$ is also the twistor space of $(N^4,c_N)$\,. Then $S_1\sqcup S_2$ and the endowed foliations determine
a twistorial map $\widetilde{\phi}:(N^4,c_N)\to(Q^2,c_Q)$ (see \cite{PanWoo-sd}\,). Obviously,
$\widetilde{\phi}|_M=\phi$ and so its uniqueness is a consequence of analyticity.\\
\indent
Conversely, from \cite{Woo-4d} (see \cite{PanWoo-sd}\,), it follows that any positive
harmonic morphism $\e:(H^4,g)\to(Q^2,c_Q)$ determines a pair of foliations by self-dual surfaces
on $(N^4,c_N)$\,. Then the leaves of these two foliations intersect $M^3$ to determine a pair of foliations
$(\V_j)_{j=1,2}$ by isotropic geodesics on $(M^3,c_M)$\,. Let $\phi:(M^3,c_M)\to(Q^2,c_Q)$
be such that ${\rm ker}\dif\!\phi=\bigl(\V_1\oplus\V_2\bigr)^{\perp}$. Then $\phi$ is horizontally conformal
and $\e=\widetilde{\phi}|_H$.
\end{proof}

\begin{rem} \label{rem:PB}
1) If we apply Theorem \ref{thm:PB} to the particular case when $(H^4,g)$ has constant curvature then we
obtain results of \cite{BaiWoo-shearfree}\,.\\
\indent
2) With the same notations as above, if $D^M$ is an Einstein--Weyl connection on $(M^3,c_M)$
with respect to which $\phi:(M^3,D^M)\to(Q^2,c_Q)$ is a harmonic morphism then $\widetilde{\phi}|_H=\phi\circ\psi$
where $\psi:N^4\to M^3$ is the retract \cite{Hit-complexmfds} of $M^3\hookrightarrow N^4$ corresponding to $D^M$.
It follows that $\psi|_H:(H^4,g)\to(M^3,c_M,D^M)$ is a harmonic morphism \cite{LouPan}\,.
\end{rem}

\indent
Note that, $\phi$ and $\widetilde{\phi}$ of Example \ref{exm:PB} satisfy the assertions of Theorem \ref{thm:PB}\,.
Other examples can be built on the following construction.

\begin{exm}[\,\cite{Cal-sds}\,] \label{exm:Cal-sds}
Let $(M^3,c_M,D^M)$ be a three-dimensional Einstein--Weyl space.
Let $h$ be a (local) representative of $c_M$ and let $\a$ be the Lee form of $D^M$ with
respect $h$\,. By passing to an open set of $M^3$, if necessary, let $I$ be an open set of $\C$
containing $0$ such that $t^2S-6\neq0$\,, on $I\times M$, where $S$ is the scalar curvature of $D^M$
with respect to $h$\,.\\
\indent
Let $N=I\times M$ and $H^4=N^4\setminus M^3$, where $M^3$ is embedded in $N^4$ through $M^3\hookrightarrow N^4$,
$x\mapsto(0,x)$\,, ($x\in M^3$). Define the Riemannian metric $g$ on $H^4$ by
\begin{equation*}
g=\frac{1}{t^2}\bigl((1-\tfrac16\,t^2S)h+(1-\tfrac16\,t^2S)^{-1}(\dif\!t+t\a+\tfrac12\,t^2*_M\!\dif\!\a)^2\bigr)\;.
\end{equation*}
\indent
Then $t^2g$ defines an anti-self-dual conformal structure on $N^4$. Moreover,
$(H^4,g)$ is the $\Hc$-space of $(M^3,c_M)$ and the projection $\psi:N^4\to M^3$\,, $(t,x)\mapsto x$\,,
$(\,(t,x)\in N^4)$, is the retract of $M^3\hookrightarrow N^4$ corresponding to $D^M$.\\
\indent
Note that, if $(M^3,c_M,D^M)$ is the Euclidean space and $h$ its canonical metric then we obtain the
$\Hc$-space of Example \ref{exm:Hspace_hyperbolic}\,. Furthermore, the resulting retract is
(the complexification of) a well-known (see, for example, \cite{BaiWoo2}\,) harmonic morphism
of warped-product type.
\end{exm}

\indent
By Remark \ref{rem:PB}(2) and with the same notations as in Example \ref{exm:Cal-sds}\,, for any harmonic morphism
$\phi:(M^3,c_M,D^M)\to(Q^2,c_Q)$ we have that $\widetilde{\phi}=\phi\circ\psi$ is as in Theorem \ref{thm:PB}\,.
Such $\phi$'s can be obtained, for example, by using for $(M^3,c_M,D^M)$ the
R.~S.~Ward and C.~R.~LeBrun construction (see \cite{Cal-sds} and the references therein).

\end{document}